\theoremstyle{plain}
\newtheorem{Theorem}{Theorem}[section]
\newtheorem{Lemma}{Lemma}[section]
\theoremstyle{definition}
\newtheorem{Conjecture}{Conjecture}
\numberwithin{equation}{section}
\begin{document}

\title{Disproof of a conjecture on the edge Mostar index}

\author{Fazal Hayat$^{a}$,  Shou-Jun Xu$^{a}$\footnote{Corresponding author
 \newline E-mail addresses: fhayatmaths@gmail.com (F. Hayat), shjxu@lzu.edu.cn (S.-J. Xu), zhoubo@scnu.edu.cn (B. Zhou)}, Bo Zhou$^{b}$ \\
$^a$School of Mathematics and Statistics,  Gansu Center for Applied Mathematics,\\
 Lanzhou University,  Lanzhou 730000,  P.R. China \\
$^{b}$School of Mathematical Sciences,  South China Normal University, \\
 Guangzhou 510631, P.R. China}

 \date{}
\maketitle

\begin{abstract}
For a given connected graph $G$, the edge Mostar index $Mo_e(G)$ is defined as  $Mo_e(G)=\sum_{e=uv \in E(G)}|m_u(e|G) - m_v(e|G)|$,  where $m_u(e|G)$ and $m_v(e|G)$ are respectively, the number of edges of $G$ lying closer to  vertex $u$ than to vertex $v$ and the number of edges of $G$ lying closer to vertex $v$ than to vertex $u$. We determine a sharp upper bound for the edge Mostar index on bicyclic graphs and identify the graphs that attain the bound, which disproves a conjecture  proposed by Liu et al. [Iranian J. Math. Chem. 11(2) (2020) 95--106].   \\ \\
{\bf Keywords}: Mostar index, edge Mostar index, bicyclic graph, distance-balanced graph.\\\\
{\bf 2010 Mathematics Subject Classification:}  05C12; 05C35
\end{abstract}

\section{Introduction}
A molecular graph is a simple graph such that its vertices correspond to the atoms and the edges to the bonds of a molecule.  Let $G$ be a graph with vertex set $V(G)$ and edge set $E(G)$.  A topological index of $G$ is a real number related to $G$.  They are widely used for characterizing molecular graphs, establishing relationships between the structure and properties of molecules, predicting the biological activity of chemical compounds, and making their chemical applications.

Dosli\'c et al. \cite{DoM} introduced a bond-additive structural invariant as a quantitative refinement of the distance non-balancedness and also a measure of peripherality in graphs, named the Mostar index. For a  graph $G$, the Mostar index is defined as
\[
 Mo(G)=\sum_{e=uv \in E(G)}|n_u(e|G) - n_v(e|G)|.
\]
where $n_u(e|G)$ is the number of vertices of $G$ closer to $u$ than to $v$ and $n_v(e|G)$ is the number of vertices closer to $v$ than to $u$.

Since its introduction in 2018, the Mostar index has already incited a lot of research, mostly concerning  trees \cite{AXK, DL, DL1, DL2, GXD, HX1, HZ1}, unicyclic graphs \cite{LD}, bicyclic graphs \cite{Te}, tricyclic graphs \cite{HX}, cacti \cite{ HZ}, chemical graphs \cite{ DL3, HLM, XZT, XZT2}.

Arockiaraj et al. \cite{ACT}, introduced the edge Mostar index as a quantitative refinement of the distance non-balancedness, also measure the peripherality of every edge and consider the contributions of all edges into a global measure of peripherality for a given chemical graph.  The edge Mostar index of $G$ is  defined as
\[
 Mo_e(G)=\sum_{e=uv \in E(G)}\psi_G(uv),
\]
where $\psi_G(uv)=|m_u(e|G) - m_v(e|G)|$, $m_u(e|G)$ and $m_v(e|G)$ are respectively, the number of edges of $G$ lying closer to  vertex $u$ than to vertex $v$ and the number of edges of $G$ lying closer to vertex $v$ than to vertex $u$. We use $\psi(uv)=|m_u(e) - m_v(e)|$ for short, if there is no ambiguity.

Imran et al. \cite{IAI} studied the edge Mostar index of chemical structures and nanostructures using graph operations. Havare \cite{H} computed the edge Mostar index for several classes of cycle-containing graphs.
Liu et al. \cite{LSX}  determined the extremal values of the edge Mostar index among trees, unicyclic graphs, cacti and posed two conjectures for the extremal  edge Mostar index among bicyclic graphs.  Ghalavand et al. \cite{GAN} gave proof to a conjecture in \cite{LSX}, and obtained the graph that minimizes the edge Mostar index among bicyclic graphs.

Motivated directly by \cite{LSX} and \cite{GAN}, we determine the unique graph that maximizes the edge Mostar index over all bicyclic graphs with  fixed size, and disprove the following conjecture.

\begin{Conjecture}\cite{LSX}\label{cnj}
If the size $m$ of bicyclic graphs is large enough, then the bicyclic graphs $B_m$ (see Fig \ref{x}) and $B^5_m$ (see Fig \ref{y}) have the maximum edge Mostar index.
\end{Conjecture}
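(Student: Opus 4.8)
The plan is to follow the standard two-step route for an extremal characterization: first prove a sharp inequality $Mo_e(G)\le f(m)$ for every bicyclic graph $G$ of size $m$, and then show that equality forces $G\cong B_m$ or $G\cong B^5_m$. I would begin by separating out the global structure. Deleting all pendant trees from a bicyclic $G$ leaves a \emph{cyclic core}, which is one of three kinds --- two cycles sharing a single vertex, two vertex-disjoint cycles joined by a path, or a theta-graph (two cycles sharing a common path). Every bicyclic $G$ is this core together with rooted trees planted at core vertices, and the whole argument aims to show that both the shape of these trees and the choice of core are driven to the configuration prescribed by $B_m$ and $B^5_m$.

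The engine of the proof is the behaviour of the index on cut edges. If $e=uv$ is a pendant edge with $v$ a leaf, then $m_v(e|G)=0$ and $m_u(e|G)=m-1$, so $\psi_G(uv)=m-1$ is as large as it can be; more generally, for any bridge $e=uv$ every other edge lies strictly closer to exactly one endpoint, whence $m_u(e|G)+m_v(e|G)=m-1$ and $\psi_G(uv)=(m-1)-2\min\{m_u(e|G),m_v(e|G)\}$, a quantity controlled entirely by the size of the smaller side. This already suggests the extremum should carry as many pendant edges as possible, and hence that the cyclic core should use as few vertices as it can. I would then establish branch-shifting lemmas in the spirit of \cite{GAN}: replacing a long hanging branch by pendant edges, and gathering pendant edges onto a single core vertex, never decreases $Mo_e$, since both moves only shrink the smaller side of each affected bridge while simultaneously making the incident core edges more unbalanced. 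Iterating collapses $G$ to a minimal cyclic core carrying a bouquet of pendant edges at one vertex, which is the shape of $B_m$ and $B^5_m$, and reduces the problem to a finite comparison over the admissible minimal cores.

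With the trees driven into canonical position, I would compute $Mo_e$ in closed form as a function of the discrete parameters of the core --- the two cycle lengths together with the length of any bridging or shared path, plus the number of pendant edges. Along each pendant edge and each bridge the two sides separate cleanly, so those contributions are immediate from the formula above; the only remaining labour is to evaluate $m_u(e|G)-m_v(e|G)$ for the handful of edges lying on or adjacent to the core. Optimising the resulting expression over the finite set of admissible cores and cycle lengths should then isolate the two claimed maximisers $B_m$ and $B^5_m$, with the two cases reflecting a parity condition on $m$.

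The main obstacle is precisely this last evaluation. Once an edge sits on the cyclic core the clean bridge accounting breaks down: a shortest path from another edge to the two endpoints of $e$ may route through either cycle, and some edges fall \emph{equidistant} from $u$ and $v$, so that they are counted in neither $m_u(e|G)$ nor $m_v(e|G)$. The number of such equidistant edges, and the resulting imbalance, depends delicately on the lengths and parities of both cycles and on how they are glued together. Carrying out this bookkeeping uniformly across all three core types --- and in particular certifying that no compact alternative core, such as a theta-graph on four or five vertices carrying the same pendant bouquet, can out-score $B_m$ and $B^5_m$ --- is the crux of the argument and the step where the comparison must be pushed through with complete care, since it is exactly here that trading one pendant edge of value $m-1$ against a more favourable core balance has to be weighed.
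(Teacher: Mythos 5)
You are attempting to prove a statement that is false: this Conjecture is exactly what the paper (titled ``Disproof of a conjecture on the edge Mostar index'') refutes, so there is no proof to reconstruct, only a refutation. Ironically, your machinery is essentially the paper's machinery --- strip and reshape pendant trees via edge-shifting lemmas, reduce to a small family of canonical cores, then run a finite comparison; the paper does precisely this, splitting bicyclic graphs into $\mathcal{G}_m^1$ (two edge-disjoint cycles, Lemma \ref{31}) and $\mathcal{G}_m^2$ (theta-graph core, Lemmas \ref{32}--\ref{35}). But the final comparison you defer to the last step comes out the opposite way from what you assert: the computations give $Mo_e(B_m)=m^2-m-24$ while $Mo_e(B^5_m)=m^2-m-28$, so $B^5_m$ loses to $B_m$ by exactly $4$ for every $m$, and Theorem \ref{a} shows that for all $m\geq 10$ the \emph{unique} maximizer is $B_m$ alone. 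The graph $B^5_m$ is only the winner within the theta-core class $\mathcal{G}_m^2$, and even there only for $m\geq 14$ (Lemma \ref{35}); it is never a global maximizer. Hence your concluding claim that the optimisation ``should then isolate the two claimed maximisers $B_m$ and $B^5_m$, with the two cases reflecting a parity condition on $m$'' describes a step that cannot be carried out, and no parity phenomenon exists: the only ties in the paper's classification occur at the small sizes $m\leq 9$, and $B^5_m$ participates in none of them.

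The instructive point is where your heuristic misleads you. You argue that the extremal graph should carry as many pendant edges as possible, hence use the smallest possible core, and you then treat $B_m$ and $B^5_m$ as the expected joint outcome. But these two graphs embody opposite answers to the real trade-off: $B_m$ spends $8$ edges on two quadrilaterals sharing a vertex (it is $S_{m_1,4}\cdot S_{m_2,4}$, with $m-8$ pendant edges at the common center), whereas $B^5_m$ spends only $6$ edges on $\Theta(2,2,2)$ and so carries \emph{two more} pendant edges, each contributing the maximal value $m-1$. The ``delicate bookkeeping'' you correctly identify as the crux --- equidistant edges on the core, imbalance of core edges against the pendant bouquet --- is exactly what decides the contest, and it decides it in favor of $B_m$: the better balance of the two $4$-cycles outweighs the two extra pendant edges of the theta-graph. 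Had you pushed your own plan through to the end, you would have produced not a proof of the Conjecture but the paper's disproof of it.
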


We disprove the Conjecture \ref{cnj}, by giving the following result.

\begin{Theorem}\label{a} Let $G$ be a bicyclic graph of size $m \geq 5$. Then
$$Mo_e(G) \leq \left\{\begin{array}{ll}
        4,         & \hbox{if $m=5$, and equality holds iff $G \cong B^3_m, B^4_m$}; \\
        m^2-3m-6,  & \hbox{if $6 \leq m \leq 8$, and equality holds iff $G \cong B^1_m, B^3_m$};  \\
        48,        & \hbox{if $m=9$, and equality holds iff $G \cong B_m, B^1_m, B^2_m, B^3_m, B^4_m$};  \\
        m^2-m-24,  & \hbox{if $m \geq 10$, and equality holds iff $G \cong B_m$};
\end{array}\right.$$
(where $B_m, B^1_m, B^2_m$ and  $B^3_m, B^4_m$ are depicted in Fig \ref{x} and Fig \ref{y}, respectively).
\end{Theorem}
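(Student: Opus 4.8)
The plan is to recast the whole problem as the minimization of a \emph{deficit}. Since a bicyclic graph $G$ of size $m$ has $m-1$ vertices and $m$ edges, and since for every edge $e=uv$ one has $m_u(e)+m_v(e)\le m-1$ and hence $\psi(e)\le m-1$, I would set $\delta(e)=(m-1)-\psi(e)\ge 0$ and write
\[
Mo_e(G)=m(m-1)-\sum_{e\in E(G)}\delta(e).
\]
Maximizing $Mo_e$ is then equivalent to minimizing the total deficit $\Delta(G)=\sum_e\delta(e)$. Two elementary facts drive everything. If $e=uv$ is a bridge, removing it disconnects $G$ and no other edge is equidistant from $u$ and $v$, so $\delta(e)=2\min\{m_u(e),m_v(e)\}$; in particular every pendant edge has $\delta(e)=0$. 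If $e$ lies on a cycle, then $\delta(e)=2\min\{m_u(e),m_v(e)\}+q(e)$, where $q(e)$ is the number of edges equidistant from $u$ and $v$. This already explains the shape of the bound: the extremal value will be $m^2-m-C$ with $C$ the least achievable total deficit.

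Next I would prove two local transformations, each of which strictly decreases $\Delta$ (hence strictly increases $Mo_e$) unless $G$ already lies in the reduced family. First, a \emph{tree-to-star} move: any tree hanging off the $2$-core of $G$ can be replaced by the same number of pendant edges placed at its attachment vertex. Since pendant edges carry zero deficit while the internal bridges of a hanging tree carry positive deficit, this never decreases $Mo_e$. Second, a \emph{centering} move: because a pendant edge has deficit $0$ wherever it is attached, relocating pendants affects only the deficits of the base edges, and I would show that sliding pendants toward the most central vertex of the base strictly lowers the base-edge deficits (a direct computation shows, for instance, that a pendant moved away from the hub of the two-$C_4$ bowtie raises a spoke deficit from $3$ to $11$). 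After these steps the extremal graph is reduced to a bicyclic base---an $\infty$-graph, a theta graph, or a dumbbell---together with a bundle of pendant edges at one optimal vertex.

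The heart of the argument is the base optimization, governed by a parity observation. If all pendants sit at a vertex $u$ of a cycle $C$, then a cycle edge $\alpha\beta$ is equidistant from its endpoints for \emph{all} external edges exactly when $d(u,\alpha)=d(u,\beta)$. In an odd cycle there is always one such ``antipodal'' edge, forcing $q(e)$ of order $m$ and a deficit $\Theta(m)$; a connecting bridge in a dumbbell creates the same defect. For an even cycle the distance strictly changes along every edge, so each cycle edge keeps $O(1)$ deficit. Hence for large $m$ the base must be two even cycles sharing a single vertex with no connecting path, and minimizing the number of base edges forces both cycles to be $C_4$: this is $B_m$, whose eight cycle edges each have deficit $3$, giving $\Delta=24$ and $Mo_e=m^2-m-24$. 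For small $m$ the smaller odd-cycle bases win instead, because fewer base edges mean more zero-deficit pendants: the diamond $K_4-e$ and the bowtie of two triangles both give $m^2-3m-6$, and the crossover is exactly $m=9$, where $m^2-m-24=m^2-3m-6=48$. I would finish by comparing the finitely many reduced candidates as explicit functions of $m$ and reading off the equality cases, treating $m=5,6,7,8,9$ separately (here $B_m$ either does not exist, for $m<8$, or merely ties, at $m=9$).

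The main obstacle lies in making the transformations strict and controlling the regime changes. Moving a single pendant alters several base-edge deficits simultaneously, and the sign of a quantity such as $|s-4|$ (for $s$ the number of pendants) flips for small $s$, so the clean monotonicity of the centering move and the dominance of the even-cycle base hold only once $m$ is large; the small-$m$ anomalies and the five-way tie at $m=9$ must be dispatched by separate, essentially finite, case checks rather than by the general argument. Equally delicate is the exhaustive base comparison: one must verify that no theta graph or dumbbell with a cleverly placed even cycle beats two $C_4$'s, which requires computing the constant term $C$ for each candidate base and confirming that $24$ is strictly smallest for $m\ge 10$.
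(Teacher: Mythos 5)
Your deficit identity $Mo_e(G)=m(m-1)-\sum_{e}\delta(e)$ is correct and exactly equivalent to the bounds the paper works with (an inequality $\psi(e)\le m-k$ is the statement $\delta(e)\ge k-1$), and your parity observation is sound and conceptually cleaner than anything in the paper; your candidate values are also right (including that for $6\le m\le 8$ the winners are the bowtie and the diamond, i.e.\ $B^2_m$ and $B^3_m$, which matches Lemmas \ref{31} and \ref{32} and suggests the ``$B^1_m$'' in the theorem statement is a typo). The first genuine gap is your centering move. There is no monotone ``slide pendants toward the most central vertex'' step, even for large $m$: in the actual extremal graphs the pendants sit at \emph{peripheral} vertices. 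Over the brace $\Theta(1,2,2)$ the maximizer for $m\ge 10$ is $B^4_m$, with all pendants at a degree-2 vertex of eccentricity 2, beating the hub placement $B^3_m$ (the eccentricity-1, degree-3 vertex) because $m^2-2m-15>m^2-3m-6$ precisely when $m>9$; for $6\le m\le 8$ the comparison reverses. Likewise the optimum over $\Theta(2,2,2)$, namely $B^5_m$, has its pendants at a degree-2 vertex, not at a hub. So the profitable direction of a pendant shift depends on both the base and on $m$ and cannot be decided by any centrality notion; this is exactly why the paper proves a separate, brace-specific shifting inequality for each of $\Theta(1,2,2)$, $\Theta(2,2,2)$, $\Theta(1,2,3)$ (Lemmas \ref{32}--\ref{34}). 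Until this step is replaced by such case-by-case inequalities, your reduction to ``one base plus one bundle of pendants at a single optimal vertex'' --- on which both your large-$m$ comparison and your finite small-$m$ checks rest --- is unproven.

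The second gap is the inference ``hence for large $m$ the base must be two even cycles sharing a single vertex.'' Your parity filter excludes exactly the non-bipartite bases: in a bipartite graph no edge has its two endpoints equidistant from any vertex, so \emph{every} bipartite base passes the filter for \emph{every} pendant location. Besides even $\infty$-graphs this leaves infinitely many theta braces $\Theta(a,b,c)$ with $a\equiv b\equiv c \pmod 2$, of which $K_{2,3}$ already comes within 4 of optimal (total deficit 28 against 24). Cutting this infinite family down to a finite candidate list requires a lower bound on a base's internal deficit that grows with its size; that is precisely the content of the paper's Lemma \ref{35} (e.g.\ six edges with $\psi\le m-7$ when $a=b=c\ge 3$), and it is absent from your outline. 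Moreover ``$m$ large'' is never quantified, while the theorem claims every $m\ge 10$: at $m=10$ the runner-up $B^4_m$ has total deficit $m+15=25$, only one more than the 24 of $B_m$, so order-of-magnitude reasoning decides nothing there, and this middle range is covered neither by your asymptotics nor by your finite checks, which stop at $m=9$. Two computational slips point the same way: a dumbbell bridge has deficit $2\min\{m_u,m_v\}\ge 6$, a constant rather than $\Theta(m)$, and moving one pendant off the hub of $B_m$ raises a spoke deficit from 3 to 5, not to 11. The architecture is attractive and could be completed, but the two quantitative pillars it needs --- a correct pendant-relocation lemma and a uniform elimination of all other bases for every $m\ge 10$ --- are exactly the brace-by-brace computations that constitute the paper's proof.
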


In section 2, we give some definitions and preliminary results.  Theorem \ref{a} is proved in section 3.

\section{Preliminaries}

 \begin{figure}
\centering
\includegraphics[width=11cm,height=3cm]{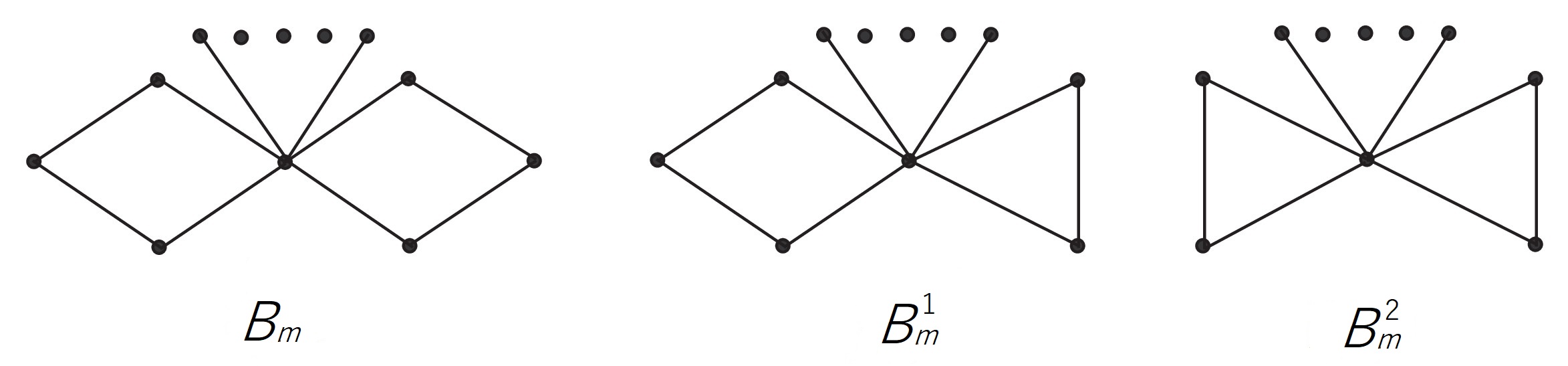}
\caption{ Graphs $B_m, B^1_m$ and $B^2_m$.}
\label{x}
\end{figure}

 \begin{figure}
\centering
\includegraphics[width=12cm,height=3.5cm]{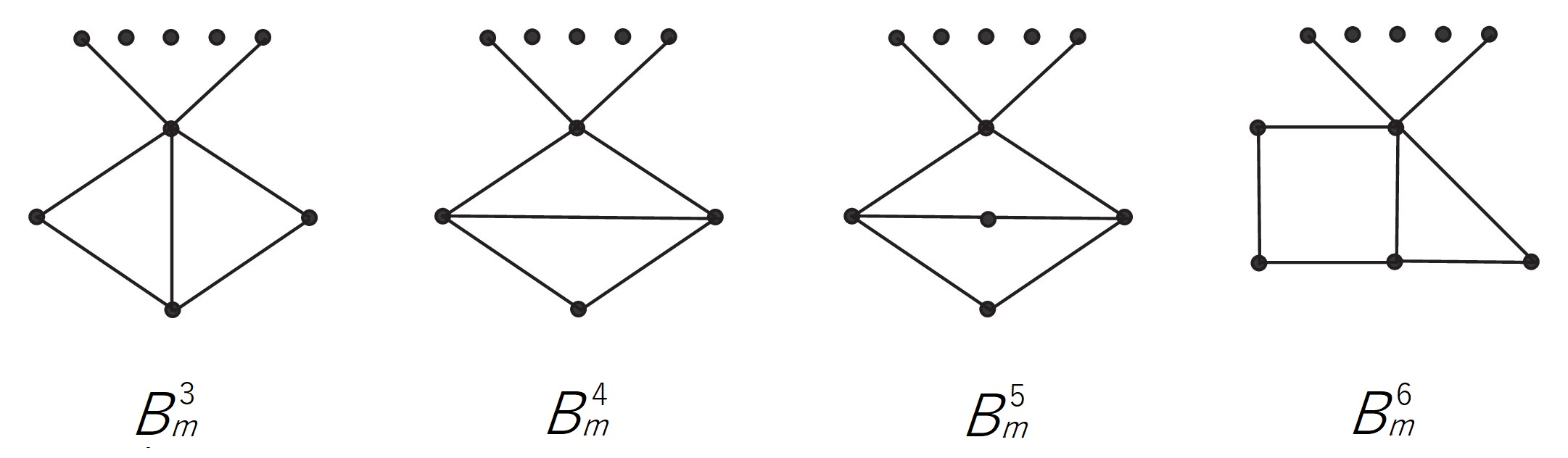}
\caption{Graphs $B^3_m, B^4_m, B^5_m$ and $B^6_m$.   }
\label{y}
\end{figure}

The order and size of $G$ is the cardinality of $V(G)$ and $E(G)$, respectively. For  $v \in V(G)$, let $N_G(v)$  be the set of vertices that are adjacent to $v$ in $G$. The degree of $v \in V(G)$, denoted by $d_G(v)$,  is the cardinality of  $N_G(v)$. A vertex with degree one is called a pendent vertex and an edge incident to a pendent vertex is called a pendent edge. The distance between $u$ and $v$ in $G$ is the least length of the path connecting $u$ and $v$ denoted by $d_G(u,v)$.
A graph $G$ with $n$ vertices is a bicyclic graph if $|E(G)|=n+1$. As usual, by $S_n$, $P_n$ and $C_n$ we denote the star, path and cycle on $n$ vertices, respectively.

The join of two graphs $G_1$ and $G_2$ is denoted by $G_1 \cdot G_2$, is obtained by identifying one vertex of $G_1$ and $G_2$. Set $u$ as the identified vertex of $G_1$ and $G_2$. If $G_1$ contains a cycle and $u$ belongs to some cycle, and $G_2$ is a tree, then we call $G_2$ a pendent tree in $G_1 \cdot G_2$  associated with $u$.  For each $e \in E(G_1)$, every path from $e$ to some edges of $G_2$ pass through $u$. Therefore, the contribution of $G_2$ to  $\sum_{e\in E(G_1)}\psi(e)$ totally depends on the size of $G_2$,  that is, changing the structure of $G_2$ cannot alter the value of $\sum_{e\in E(G_1)}\psi(e)$.

\begin{Lemma}\cite{LSX}\label{12}
Let $G$ be a  graph of size $m$ with a cut edge $e=uv$.  Then $\psi(e)\leq m-1$  with equality if and only if  $e=uv$ is a pendent edge.
\end{Lemma}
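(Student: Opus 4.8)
The plan is to exploit the defining property of a cut edge: deleting $e=uv$ splits $G$ into exactly two connected components, one containing $u$ and one containing $v$. First I would name these components $G_u$ and $G_v$, and set $m_1=|E(G_u)|$ and $m_2=|E(G_v)|$. Since $e$ is the only edge joining the two components, every edge of $G$ other than $e$ lies entirely inside $G_u$ or entirely inside $G_v$, so that $m_1+m_2=m-1$. The whole argument then reduces to evaluating $m_u(e)$ and $m_v(e)$ in terms of $m_1$ and $m_2$.

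The key step is the claim that every edge of $G_u$ is closer to $u$ than to $v$, and symmetrically every edge of $G_v$ is closer to $v$. To see this I would use the fact that any path from a vertex $x\in V(G_u)$ to $v$ must traverse $e$, whence $d_G(x,v)=d_G(x,u)+1$; the same shift holds for both endpoints of any edge $f$ lying in $G_u$, so taking the minimum over the two endpoints gives the edge-to-vertex relation $d_G(f,v)=d_G(f,u)+1>d_G(f,u)$, i.e. $f$ is strictly closer to $u$. The mirror statement handles $G_v$. I would also check the status of $e$ itself: as $u$ and $v$ are its endpoints, $e$ is equidistant from them, so it contributes to neither count. Consequently $m_u(e)=m_1$ and $m_v(e)=m_2$ exactly.

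With this bookkeeping done, the conclusion is immediate: $\psi(e)=|m_u(e)-m_v(e)|=|m_1-m_2|\le m_1+m_2=m-1$, since $m_1,m_2\ge 0$. For the equality case I would note that $|m_1-m_2|=m_1+m_2$ forces $\min\{m_1,m_2\}=0$, say $m_2=0$. A connected component with no edges is a single vertex, so $G_v=\{v\}$, which means $v$ is a pendent vertex and $e=uv$ is a pendent edge. The converse is clear, since a pendent edge places all of the remaining $m-1$ edges on one side, giving $\psi(e)=m-1$.

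I expect the only real subtlety to be the edge-to-vertex distance bookkeeping: making the distance-shift $d_G(\cdot,v)=d_G(\cdot,u)+1$ rigorous for \emph{edges} rather than just vertices, and confirming that $e$ itself is correctly excluded from both counts, so that the identities $m_u(e)=m_1$ and $m_v(e)=m_2$ hold with equality rather than merely as inequalities. Once those are pinned down, the bound and its equality characterization follow from the single estimate $|m_1-m_2|\le m_1+m_2$.
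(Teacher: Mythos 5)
Your proof is correct and complete: splitting $G$ at the cut edge into $G_u$ and $G_v$ with $m_1+m_2=m-1$, the distance-shift argument $d_G(\cdot,v)=d_G(\cdot,u)+1$ for edges in $G_u$ (and its mirror), the observation that $e$ itself is equidistant from its endpoints and counts for neither side, and the equality analysis $|m_1-m_2|=m_1+m_2 \Leftrightarrow \min\{m_1,m_2\}=0$ together give exactly the claimed bound and characterization. Note that the paper states this lemma without proof, quoting it from \cite{LSX}, so there is no internal argument to compare against; yours is the standard and natural one.
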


By Lemma \ref{12}, if $e$ is a pendent edge, then $\psi(e)$ is maximum. Therefore, it is easy to verify the following result.

\begin{Lemma}\label{13}
Let $H$ be a graph of size $m$.  Then $$Mo_e(H_1 \cdot H)\leq Mo_e(H_1 \cdot S_m),$$  where the common vertex of $H_1$ and $ S_m$  is the center
of $S_m$.
\end{Lemma}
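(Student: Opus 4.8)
The plan is to split the sum defining $Mo_e$ according to whether an edge lies in $H_1$ or in the attached graph, and to compare the two pieces separately. Write $M=|E(H_1)|+m$ for the common size of $H_1\cdot H$ and $H_1\cdot S_m$ (both attached graphs have size $m$, so the two resulting graphs have the same size). Since identifying the single vertex $u$ merges no edges, we have the disjoint decomposition $E(H_1\cdot H)=E(H_1)\sqcup E(H)$, and likewise for $S_m$, so that
\[
Mo_e(H_1\cdot H)=\sum_{e\in E(H_1)}\psi_{H_1\cdot H}(e)+\sum_{e\in E(H)}\psi_{H_1\cdot H}(e),
\]
and symmetrically for $H_1\cdot S_m$. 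It then suffices to compare the $H_1$-sums with each other and the attached-graph sums with each other.

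First I would argue that the two $H_1$-contributions agree. For a fixed edge $e=ab\in E(H_1)$ and any edge $f$ of the pendent part, every shortest path from $f$ to $a$ or to $b$ runs through the identification vertex $u$, so $f$ is closer to $a$ than to $b$ exactly when $d(u,a)<d(u,b)$, a condition that does not depend on $f$. Moreover the distances inside $H_1$ are unaffected by what is attached at $u$, so the number of edges of $H_1$ closer to $a$ (resp.\ $b$) is intrinsic to $H_1$. Hence the pendent part contributes $+m$, $-m$, or $0$ to $m_a(e)-m_b(e)$ according only to the comparison of $d(u,a)$ and $d(u,b)$, and $\psi_{H_1\cdot H}(e)$ depends on the attached graph only through its size $m$. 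This is precisely the structure-independence recorded before Lemma \ref{12}, whence $\sum_{e\in E(H_1)}\psi_{H_1\cdot H}(e)=\sum_{e\in E(H_1)}\psi_{H_1\cdot S_m}(e)$.

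Next I would compare the contributions of the attached edges. For every edge $e$ of any graph of size $M$ one has $\psi(e)\le M-1$: the counts $m_u(e)$ and $m_v(e)$ are disjoint and both omit $e$ itself (which is equidistant), so $m_u(e)+m_v(e)\le M-1$ and therefore $|m_u(e)-m_v(e)|\le M-1$. Applying this to each of the $m$ edges of $H$ gives $\sum_{e\in E(H)}\psi_{H_1\cdot H}(e)\le m(M-1)$. On the other hand, because the common vertex is the center of $S_m$, every edge of $S_m$ is a pendent edge of $H_1\cdot S_m$, so Lemma \ref{12} yields $\psi_{H_1\cdot S_m}(e)=M-1$ for each of them and $\sum_{e\in E(S_m)}\psi_{H_1\cdot S_m}(e)=m(M-1)$. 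Combining the three estimates gives $Mo_e(H_1\cdot H)\le Mo_e(H_1\cdot S_m)$.

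The only point requiring care is that Lemma \ref{12} is phrased for \emph{cut} edges, whereas the edges of a general $H$ need not be cut edges. I therefore use the elementary universal bound $\psi(e)\le M-1$ on the $H$-side, and reserve Lemma \ref{12} for the star side, where every edge genuinely is pendent and the equality $\psi(e)=M-1$ is attained. I would also note explicitly that the structure-independence remark, although stated for pendent trees, uses only that $u$ is the unique shared vertex, so it applies verbatim to an arbitrary connected attached graph $H$; this is what makes the $H_1$-sums cancel and reduces the whole statement to maximizing the pendent edges one at a time.
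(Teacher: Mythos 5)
Your proof is correct and takes essentially the same route the paper intends for this lemma (which it leaves as ``easy to verify''): decompose the edge sum, use the structure-independence remark preceding Lemma \ref{12} to equate the $H_1$-contributions, and observe that the attached edges contribute maximally exactly when they are all pendent, as in $S_m$. Your one refinement --- replacing Lemma \ref{12} by the universal bound $\psi(e)\le M-1$ on the $H$-side, since the edges of an arbitrary $H$ need not be cut edges --- is a genuine and necessary patch to the paper's one-line justification, and you handle it correctly.
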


Let $S_{m,r} = S_{m-r} \cdot C_r$, where the common vertex of $S_{m-r}$ and $C_r$ is the center  of $S_{m-r}$.

\begin{Lemma}\cite{LSX}\label{1} Let $G$ be a unicyclic graph of size $m \geq 3$. Then
$$Mo_e(G) \leq \left\{\begin{array}{ll}
        m^2-2m-3,  & \hbox{if $3 \leq m \leq 8$, and equality holds iff $G \cong S_{m, 3} $};  \\
        60,        & \hbox{if $m=9$, and equality holds iff $G \cong S_{m, 3}, S_{m, 4} $};  \\
        m^2-m-12,  & \hbox{if $m \geq 10$, and equality holds iff $G \cong S_{m, 4} $}.
\end{array}\right.$$
\end{Lemma}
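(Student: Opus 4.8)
The plan is to reduce an arbitrary unicyclic graph to the special form $S_{m,r}$ and then to optimize over the cycle length $r$. A unicyclic graph of size $m$ consists of a unique cycle $C_r$ (with $3\le r\le m$) together with pendant trees rooted at vertices of $C_r$. First I would invoke the remark preceding Lemma \ref{12} together with Lemma \ref{13}: each pendant tree may be replaced by a star centered at its root without decreasing $Mo_e$, and since this replacement preserves the size, it suffices to consider graphs consisting of $C_r$ with pendant stars attached at its vertices. At this point every edge off the cycle is a pendant edge, so by Lemma \ref{12} each such edge contributes exactly $m-1$ to $Mo_e$, independently of where it is attached; hence the pendant edges contribute exactly $(m-r)(m-1)$ in total, and the whole problem reduces to controlling $\sum_{e\in E(C_r)}\psi(e)$ and to choosing $r$.

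Second, I would show that this cycle-edge contribution is maximized by concentrating all $m-r$ pendant edges at a single vertex, i.e.\ by passing to $G\cong S_{m,r}$. For this I would use a shifting argument: writing $a_1,\dots,a_r$ for the numbers of pendant edges at the successive cycle vertices, I would evaluate $\psi(e)$ for each cycle edge $e=v_iv_{i+1}$ by splitting the cycle into its two arcs about $e$ and recording, for each pendant bundle and each remaining cycle edge, whether it lies strictly closer to $v_i$, strictly closer to $v_{i+1}$, or is equidistant. I would then compare $\sum_e\psi(e)$ before and after moving one pendant edge toward the chosen vertex and check that the sum does not decrease (and decreases unless already concentrated).

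Third, with $G\cong S_{m,r}$ I would compute $\sum_{e\in E(C_r)}\psi(e)$ in closed form, distinguishing the parity of $r$. The computation yields a contribution of $r(m-r)$ when $r$ is even and $(r-1)(m-r)$ when $r$ is odd, the loss of one edge's worth in the odd case coming from the edge antipodal to the star-vertex, whose two sides are exactly balanced. Consequently
\[
Mo_e(S_{m,r})=\begin{cases}(m-r)(m+r-1)=m^2-m-r(r-1), & r\text{ even},\\ (m-r)(m+r-2)=m^2-2m-r(r-2), & r\text{ odd}.\end{cases}
\]
Both expressions are strictly decreasing in $r$ on their respective ranges $r\ge 4$ and $r\ge 3$, so the only candidates for the maximum are $r=4$, giving $m^2-m-12$, and $r=3$, giving $m^2-2m-3$.

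Finally I would compare the two surviving candidates. Since $(m^2-m-12)-(m^2-2m-3)=m-9$, the cycle of length $4$ wins for $m\ge 10$, the cycle of length $3$ wins for $3\le m\le 8$, and the two coincide (value $60$) at $m=9$; this produces the three cases of the statement, and tracking the equality conditions in the reductions above gives the claimed uniqueness of $S_{m,3}$, $S_{m,4}$, or both. The main obstacle is the second step: unlike pendant edges, a cycle edge $e=uv$ demands a careful count of edges strictly closer to $u$ than to $v$, and keeping track of the equidistant edges (which depend on the parity of $r$ and on the placement of the pendant bundles) is exactly what makes the shifting inequality delicate. The monotonicity in $r$ of the third step must also be established uniformly for all admissible $r$, not merely verified for small values.
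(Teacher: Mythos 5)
First, a point of orientation: the paper does not prove this lemma at all --- it is quoted from \cite{LSX} and used as a black box --- so the comparison here is purely about whether your reconstruction is sound. Your overall strategy (replace pendant trees by pendant stars via Lemmas \ref{12} and \ref{13}, reduce to a cycle $C_r$ with pendant bundles $a_0,\dots,a_{r-1}$, concentrate the bundles at one vertex to reach $S_{m,r}$, then compute $Mo_e(S_{m,r})$ by parity and optimize over $r$) is the right one, and your closed formulas are correct: the cycle-edge contribution of $S_{m,r}$ is $r(m-r)$ for even $r$ and $(r-1)(m-r)$ for odd $r$, and the final comparison $(m^2-m-12)-(m^2-2m-3)=m-9$ gives the three cases of the statement.

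However, the mechanism you propose for the concentration step is genuinely broken: moving \emph{one pendant edge at a time} toward a chosen vertex can strictly decrease $Mo_e$. Concretely, take $C_5$ with vertices $v_0,\dots,v_4$ and bundles $(a_0,\dots,a_4)=(2,2,2,0,0)$, so $m=11$. For the edge $v_iv_{i+1}$ one has $\psi(v_iv_{i+1})=|a_{i-1}+a_i-a_{i+1}-a_{i+2}|$ (indices mod $5$; the bundle at the vertex antipodal to the edge is equidistant), so the cycle edges contribute $2+2+4+0+4=12$. Choose $v_0$ as the target (all bundles are tied, so this choice is as good as any) and move a single pendant edge from $v_1$ to $v_0$, giving $(3,1,2,0,0)$: the contribution becomes $0+2+3+1+4=10$, while the pendant edges contribute $(m-5)(m-1)$ in both graphs, so $Mo_e$ strictly drops from $72$ to $70$. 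Hence the claimed monotonicity fails and an induction on single-edge moves cannot close. The standard repair is exactly what this paper does in its own shifting lemmas (Lemmas \ref{32}--\ref{34}): shift an \emph{entire} bundle onto a vertex whose bundle is at least as large (the hypotheses $a_1\ge a_2$, $a_3\ge a_4\ge a_5$ there are precisely what make those difference computations non-negative), iterating over bundles rather than over edges. Alternatively one can avoid shifting altogether: since each cycle edge's $\psi$ is the absolute value of a $\pm1$-signed sum of bundles (with one zero coefficient per edge when $r$ is odd), the triangle inequality immediately bounds the cycle contribution by $r(m-r)$, resp.\ $(r-1)(m-r)$, and equality forces any two nonzero bundles never to be separated by a cycle edge, which on a cycle forces a single nonzero bundle. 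With either repair, the rest of your outline (including the equality analysis and the optimization over $r$) goes through.
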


\begin{Lemma}\label{14} Let $G_1$ be a connected graph of size $m_1$  and  $G_2$ be a unicyclic graph of size $m_2$. Then
$$ Mo_e(G_1 \cdot G_2 ) \leq \left\{\begin{array}{ll}
         Mo_e(G_1 \cdot S_{m_2, 3} )  & \hbox{for $m_1 + m_2 \leq 8$};   \\
         Mo_e(G_1 \cdot S_{m_2, 3} )= Mo_e(G_1 \cdot S_{m_2, 4} )       & \hbox{for $m_1 + m_2 = 9$};  \\
        Mo_e(G_1 \cdot S_{m_2, 4} )  & \hbox{for $m_1 + m_2 \geq 10$};
\end{array}\right.$$
where the common vertex of $G_1$ and $S_{m_2, 3}$ (resp. $G_1$ and $S_{m_2, 4}$)  is the center
of $S_{m_2, 3}$ (resp. $S_{m_2, 4}$).
\end{Lemma}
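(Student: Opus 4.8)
The plan is to decompose the edge Mostar index of $G_1\cdot G_2$ across the cut vertex $u$ and to show that, up to an additive constant, it is controlled entirely by a single auxiliary unicyclic graph of size $m_1+m_2$, to which Lemma~\ref{1} applies directly. Write $Mo_e(G_1\cdot G_2)=A+B$, where $A=\sum_{e\in E(G_1)}\psi(e)$ and $B=\sum_{f\in E(G_2)}\psi(f)$. For the first sum I would use the distance-additivity across the cut vertex recorded before Lemma~\ref{12}: for $e=xy\in E(G_1)$ every vertex of $G_2$ is reached from $x$ and from $y$ only through $u$, so all $m_2$ edges of $G_2$ fall on the side of whichever endpoint of $e$ is closer to $u$ (and cancel when $x,y$ are equidistant from $u$). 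Hence $m_x(e)$ and $m_y(e)$ each shift by the same amount $m_2$ relative to their values inside $G_1$, so every summand of $A$, and thus $A$ itself, depends only on the fixed data $G_1$, the position of $u$ in $G_1$, and the number $m_2$ — never on the structure of $G_2$. Therefore $A$ is constant throughout the optimization.

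For the second sum the roles are reversed: for $f=ab\in E(G_2)$ all $m_1$ edges of $G_1$ lie on the side of the endpoint of $f$ closer to $u$, exactly as a bundle of $m_1$ pendant edges attached at $u$ would. Let $W$ be the unicyclic graph of size $m:=m_1+m_2$ obtained from $G_2$ by attaching $m_1$ pendant edges at $u$. Since the copy of $G_2$ carries the same metric in $G_1\cdot G_2$ and in $W$, and the external contribution to each $f$ is $m_1$ on the same side in both, one gets $\psi_{G_1\cdot G_2}(f)=\psi_W(f)$ for every $f\in E(G_2)$, so $B=\sum_{f\in E(G_2)}\psi_W(f)$. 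Each of the $m_1$ new edges is a pendant, hence cut, edge of $W$, so by Lemma~\ref{12} it contributes exactly $m-1$ to $Mo_e(W)$; subtracting these gives $B=Mo_e(W)-m_1(m-1)$.

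Combining the two parts yields $Mo_e(G_1\cdot G_2)=Mo_e(W)+C$, where $C=A-m_1(m-1)$ is independent of the choice of $G_2$; in particular the same constant $C$ appears for $G_1\cdot S_{m_2,3}$ and $G_1\cdot S_{m_2,4}$. Thus maximizing $Mo_e(G_1\cdot G_2)$ over unicyclic $G_2$ of size $m_2$ is equivalent to maximizing $Mo_e(W)$ over those unicyclic $W$ of size $m$ that carry at least $m_1$ pendant edges at one vertex. The key observation is that the global maximizers of Lemma~\ref{1} already lie in this restricted family: $S_{m,3}$ (resp.\ $S_{m,4}$) is a triangle (resp.\ a $4$-cycle) together with $m-3$ (resp.\ $m-4$) pendant edges at one vertex, so deleting $m_1$ of those pendants recovers exactly $S_{m_2,3}$ (resp.\ $S_{m_2,4}$) with $u$ at its center. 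Hence the restricted maximum equals the unrestricted one, uniqueness transfers from Lemma~\ref{1}, and the three ranges $m\le 8$, $m=9$, $m\ge 10$ translate verbatim into the three cases of the statement.

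The routine but delicate part is the bookkeeping of the first two paragraphs — verifying that the cut vertex makes $A$ structure-independent and that $G_1$ and a bundle of $m_1$ pendants act identically on every edge of $G_2$; once this is clean, the reduction to Lemma~\ref{1} is immediate. The only genuine boundary case to watch is the lower end of the ranges, where one must check $m_2\ge 4$ so that $S_{m_2,4}$ exists (when $m_2=3$ the triangle is forced and only the $S_{m_2,3}$ branch is meaningful); this boundary behavior is precisely what produces the threshold at $m_1+m_2=9$ inherited from Lemma~\ref{1}.
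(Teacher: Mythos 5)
Your proposal is correct and takes essentially the same route as the paper's own proof: the identical decomposition of $Mo_e(G_1\cdot G_2)$ at the cut vertex, the identical auxiliary unicyclic graph (your $W$ is exactly $S_{m_1}\cdot G_2$, with the same correction term $m_1(m-1)$ for its pendant edges via Lemma~\ref{12}), and the identical reduction to Lemma~\ref{1} through the observation that $S_{m_1}\cdot S_{m_2,3}\cong S_{m_1+m_2,3}$ (resp.\ $S_{m_1}\cdot S_{m_2,4}\cong S_{m_1+m_2,4}$). The only blemish is your sentence claiming $m_x(e)$ and $m_y(e)$ ``each shift by the same amount $m_2$'' --- in fact only the endpoint of $e$ closer to $u$ gains all $m_2$ edges of $G_2$ --- but the conclusion you actually use (that the $E(G_1)$-sum depends only on $G_1$, $u$ and the size $m_2$) is exactly what your preceding, correct sentence establishes.
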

\begin{proof} If $ m_1 + m_2 \leq 8 $, then by Lemma \ref{1}, we have
\begin{eqnarray*}
 Mo_e(G_1 \cdot G_2 ) & = & \sum_{e=uv \in E(G_1 \cdot G_2)}\psi_{G_1 \cdot G_2}(uv) \\
          & = & \sum_{e=uv \in E(G_1)}\psi_{G_1 \cdot G_2}(uv) + \sum_{e=uv \in E( G_2)}\psi_{G_1 \cdot G_2}(uv) \\
          & = & \sum_{e=uv \in E(G_1)}\psi_{G_1 \cdot G_2}(uv) +  Mo_e(S_{m_1} \cdot G_2 )-m_1(m-1) \\
          &\leq& \sum_{e=uv \in E(G_1)}\psi_{G_1 \cdot S_{m_2, 3}}(uv) +  Mo_e(S_{m_1} \cdot S_{m_2, 3})-m_1(m-1) \\
          & = & \sum_{e=uv \in E(G_1)}\psi_{G_1 \cdot S_{m_2, 3}}(uv) + \sum_{e=uv \in E( S_{m_2, 3})}\psi_{G_1 \cdot S_{m_2, 3}}(uv) \\
          & = &  Mo_e(G_1 \cdot S_{m_2, 3} ).
 \end{eqnarray*}
Similarly, if $ m_1 + m_2 = 9 $, then $Mo_e(G_1 \cdot G_2 ) \leq Mo_e(G_1 \cdot S_{m_2, 3} )= Mo_e(G_1 \cdot S_{m_2, 4} )$;  if  $ m_1 + m_2 \geq 10 $, then  $Mo_e(G_1 \cdot G_2 ) \leq  Mo_e(G_1 \cdot S_{m_2, 4} )$.
\end{proof}

The $\Theta$-graph is the graph connecting two fixed vertices by three internally disjoint paths. If the corresponding three paths have the lengths $l_1, l_2, l_3$ with $l_1 \leq l_2 \leq l_3$,  then denote as $\Theta (l_1, l_2, l_3)$. Let $\mathcal{G}_m^1$ be the set of bicyclic graphs of size $m$ having exactly two edge-disjoint cycles. Let $\mathcal{G}_m^2$ be the set of bicyclic graphs of size $m$ having three cycles. Moreover, if $G \in \mathcal{G}_m^2$, then it has a $\Theta$-graph as its subgraph, which is called the brace of $G$.

\section{Proof of Theorem \ref{a}}

\begin{Lemma}\label{31} Let $G \in \mathcal{G}_m^1$ . Then
$$Mo_e(G) \leq \left\{\begin{array}{ll}
        m^2-3m-6,  & \hbox{if $6 \leq m \leq 8$, and equality holds iff $G \cong B^2_m$};  \\
        48,        & \hbox{if $ m=9$, and equality holds iff $G \cong B_m, B^1_m, B^2_m$};  \\
        m^2-m-24,  & \hbox{if $m \geq 10$, and equality holds iff $G \cong B_m$}.
\end{array}\right.$$
\end{Lemma}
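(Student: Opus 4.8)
The plan is to reduce an arbitrary member of $\mathcal{G}_m^1$ to one of a few explicit ``bowtie'' graphs by two successive applications of Lemma~\ref{14}, and then to finish by a direct edge count and a comparison of quadratics. First I would record the structural decomposition: every $G \in \mathcal{G}_m^1$ has two edge-disjoint cycles $C$ and $C'$, which either share exactly one vertex or are joined by a path. In both cases there is a cut vertex $u$ lying on $C$ with $G = G_1 \cdot G_2$, where $G_1$ is the unicyclic graph carrying $C$ (with whatever hangs on it) and $G_2$ is the unicyclic graph carrying $C'$ (together with the connecting path, if any, and the remaining pendant trees). Writing $m_i = |E(G_i)|$ we have $m_1, m_2 \geq 3$ and $m_1 + m_2 = m$, which is exactly why the statement begins at $m = 6$.

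Next I would apply Lemma~\ref{14} to the pair $(G_1, G_2)$ to bound $Mo_e(G) = Mo_e(G_1 \cdot G_2)$ by $Mo_e(G_1 \cdot S_{m_2, k})$, where $k = 3$ for $m \leq 8$, $k = 4$ for $m \geq 10$, and either value for $m = 9$, the sun $S_{m_2,k}$ being attached at its center $u$. This collapses all of $G_2$, including the connecting path and every pendant tree of $G_2$, onto a cycle $C_k$ with $m_2 - k$ pendant edges at $u$. Since $S_{m_2,k}$ is again a connected graph and $G_1$ is unicyclic, I would then invoke Lemma~\ref{14} a second time with the roles reversed, replacing $G_1$ by $S_{m_1, k'}$ centered at $u$, with $k' \in \{3,4\}$ chosen by the same size thresholds. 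The outcome is that $Mo_e(G)$ is dominated by $Mo_e$ of a graph consisting of two cycles $C_k, C_{k'}$ sharing the single vertex $u$ together with $m - k - k'$ pendant edges at $u$: for $m \leq 8$ the two-triangle graph $B^2_m$, for $m \geq 10$ the two-square graph $B_m$, and for $m = 9$ any of the three combinations $(k,k') \in \{(3,3),(3,4),(4,4)\}$.

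Finally I would compute $Mo_e$ of the three candidate bowties directly. Each of the $m - k - k'$ pendant edges contributes $m - 1$ by Lemma~\ref{12}; the cycle edges incident to $u$, and in the square case also the two ``far'' cycle edges, each have the entire remaining graph lying closer to one endpoint, so a short edge-distance count yields $Mo_e(B^2_m) = m^2 - 3m - 6$, $Mo_e(B^1_m) = m^2 - 2m - 15$, and $Mo_e(B_m) = m^2 - m - 24$. Comparing these three quadratics, the pairwise differences $Mo_e(B^1_m) - Mo_e(B^2_m) = m-9$ and $Mo_e(B_m) - Mo_e(B^1_m) = 9-m$ are linear and change sign at $m = 9$, which produces the three regimes. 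The equality cases then follow once one checks that equality in Lemma~\ref{14} forces each factor to already be a sun: in particular any nontrivial connecting path or off-center pendant is strictly dominated, so only the vertex-sharing bowties $B_m, B^1_m, B^2_m$ survive, and the comparison singles out $B^2_m$ for $6 \leq m \leq 8$, all three for $m = 9$, and $B_m$ for $m \geq 10$.

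I expect the main obstacle to be the iteration in the second paragraph: one must verify that Lemma~\ref{14} may legitimately be applied twice, tracking that the common vertex $u$ stays the center of the first sun while the second reduction is performed, and that the identity $S_{m_1} \cdot S_{m_2, k} = S_{m, k}$ (implicit in the proof of Lemma~\ref{14}) is precisely what makes the two suns merge at one common center rather than generating a spurious extra vertex. The companion equality analysis, showing that a genuine connecting path strictly decreases $Mo_e$ and so cannot occur at the optimum, is the other point demanding care.
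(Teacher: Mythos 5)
Your proposal follows essentially the same route as the paper's own proof: decompose $G = G_1 \cdot G_2$ into two unicyclic graphs meeting at a cut vertex, apply Lemma~\ref{14} twice (once in each role) to replace both factors by suns $S_{m_1,k'} \cdot S_{m_2,k}$ with $k, k' \in \{3,4\}$ chosen by the size thresholds, and finish by computing and comparing $Mo_e(B^2_m)=m^2-3m-6$, $Mo_e(B^1_m)=m^2-2m-15$, $Mo_e(B_m)=m^2-m-24$. The only flaw is a sign slip in the final comparison --- $Mo_e(B_m) - Mo_e(B^1_m) = m-9$, not $9-m$ --- and since both pairwise differences in fact equal $m-9$, the three regimes and the equality cases you state are unaffected.
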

\begin{proof}
Let $G \in \mathcal{G}_m^1$. Then there are two unicyclic graphs $G_1$ and $G_2$ with size $m_1$ and $m_2$, respectively such that $G = G_1 \cdot G_2 $. Then, in view of Lemma \ref{14}. If $6 \leq m \leq 8$,  we get
\begin{eqnarray*}
 Mo_e(G ) & = & Mo_e(G_1 \cdot G_2 ) \leq  Mo_e(G_1 \cdot S_{m_2, 3} )\\
          &\leq & Mo_e(S_{m_1, 3} \cdot S_{m_2, 3} )= Mo_e(B^2_m);
\end{eqnarray*}
if $m= 9$, then
\begin{eqnarray*}
 Mo_e(G ) & = & Mo_e(G_1 \cdot G_2 ) \leq  Mo_e(G_1 \cdot S_{m_2, 3} )= Mo_e(G_1 \cdot S_{m_2, 4} )\\
          &\leq & Mo_e(S_{m_1, 3} \cdot S_{m_2, 3} )=  Mo_e(S_{m_1, 3} \cdot S_{m_2, 4} )= Mo_e(B^2_m)\\
           & = &  Mo_e(S_{m_1, 4} \cdot S_{m_2, 4} ) = Mo_e(B_m) = Mo_e(B^1_m)= Mo_e(B^2_m);
\end{eqnarray*}
If $m \geq 10$, we have
\begin{eqnarray*}
 Mo_e(G ) & = & Mo_e(G_1 \cdot G_2 ) \leq  Mo_e(G_1 \cdot S_{m_2, 4} )\\
          &\leq & Mo_e(S_{m_1, 4} \cdot S_{m_2, 4} )= Mo_e(B_m).
\end{eqnarray*}
By simple calculation, we get $Mo_e(B_m)= m^2-m-24$, $Mo_e(B^1_m)= m^2-2m-15$, and $Mo_e(B^2_m)= m^2-3m-6$. This completes the proof.
\end{proof}

\begin{Lemma}\label{32} Let $G \in \mathcal{G}_m^2$ with brace $\Theta (1,2,2)$. Then
$$Mo_e(G) \leq \left\{\begin{array}{ll}
        4,  & \hbox{if $m=5$, and equality holds iff $G \cong B^3_m, B^4_m$};  \\
        m^2-3m-6,  & \hbox{if $6 \leq m \leq 8$, and equality holds iff $G \cong B^3_m$};  \\
        48,        & \hbox{if $ m=9$, and equality holds iff $G \cong  B^3_m, B^4_m$};  \\
        m^2-2m-15,  & \hbox{if $m \geq 10$, and equality holds iff $G \cong B^4_m$}.
\end{array}\right.$$
\end{Lemma}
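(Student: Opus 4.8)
The plan is to reduce the problem to a finite optimisation over how the $m-5$ non-brace edges are distributed, and then to solve that optimisation exactly. Write the brace $\Theta(1,2,2)$ with its two degree-$3$ vertices $a,b$ (joined by the length-$1$ path, i.e. the edge $ab$) and its two degree-$2$ vertices $c,d$, so that the five brace edges are $ab,ac,cb,ad,db$. Every other edge of $G$ lies in a pendant tree hanging off one of $a,b,c,d$. By Lemma \ref{13}, applied once at each of the four brace vertices with the rest of $G$ in the role of $H_1$, replacing each pendant tree by a star centred at the corresponding brace vertex does not decrease $Mo_e(G)$; hence it suffices to maximise over the graphs determined by the numbers $p_a,p_b,p_c,p_d\ge 0$ of pendant edges at $a,b,c,d$, subject to $p_a+p_b+p_c+p_d=m-5=:s$, with uniqueness coming from the equality case of Lemma \ref{13} (which forces stars) together with the optimisation below.

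Next I would compute $Mo_e$ in closed form. Each of the $s$ pendant edges is a cut edge, so by Lemma \ref{12} it contributes $m-1$, giving $(m-5)(m-1)$ in total. For the five brace edges I would determine, for each edge $e=uv$, which remaining edges lie closer to $u$, which to $v$, and which are equidistant, using that a pendant leaf at a vertex $w$ is at distance $2$ from every brace-neighbour of $w$. A direct count yields
\[
\psi(ab)=|p_a-p_b|,\qquad \psi(ac)=|1+p_a+p_d-p_c|,\qquad \psi(cb)=|1+p_b+p_d-p_c|,
\]
and symmetrically $\psi(ad)=|1+p_a+p_c-p_d|$ and $\psi(db)=|1+p_b+p_c-p_d|$, so that $Mo_e(G)=(m-5)(m-1)+S$, where $S$ is the sum of these five terms. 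This distance bookkeeping is the technical heart of the argument and the step most prone to error, since one must correctly assign every pendant edge and every brace edge to its "closer to $u$", "closer to $v$", or "equidistant" bucket.

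Finally I would maximise $S$ over the simplex $p_a+p_b+p_c+p_d=s$. Applying the identity $|x|+|y|=\max(|x+y|,|x-y|)$ to the pairs $\{\psi(ac),\psi(ad)\}$ and $\{\psi(cb),\psi(db)\}$ gives
\[
S=|p_a-p_b|+2\max\!\bigl(1+p_a,\,|p_c-p_d|\bigr)+2\max\!\bigl(1+p_b,\,|p_c-p_d|\bigr).
\]
Writing $D=|p_c-p_d|$ and assuming $p_a\ge p_b$, a three-way case split on the position of $D$ relative to $1+p_b$ and $1+p_a$ settles everything: if $D\le 1+p_b$ then $S=|p_a-p_b|+4+2(p_a+p_b)\le 3s+4$, with equality exactly when all mass sits at $a$ or $b$ (the graph $B^3_m$); if $D\ge 1+p_a$ then $S=|p_a-p_b|+4D\le (p_a+p_b)+4(s-p_a-p_b)\le 4s$, with equality exactly when all mass sits at $c$ or $d$ (the graph $B^4_m$); and the intermediate case yields $S\le 3s+2$, strictly below both. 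Thus $S\le\max(3s+4,\,4s)$ with the stated equality cases. Substituting $s=m-5$ gives $Mo_e(B^3_m)=m^2-3m-6$ and $Mo_e(B^4_m)=m^2-2m-15$, and comparing $3s+4$ with $4s$ — equal precisely at $s=4$, i.e. $m=9$ — produces the four cases of the statement (with $B^3_m\cong B^4_m\cong\Theta(1,2,2)$ when $m=5$, where $s=0$). I expect the brace distance computation to be the only real obstacle; once the five $\psi$-values are in hand, the optimisation is routine through the $\max$-identity and the elementary case analysis.
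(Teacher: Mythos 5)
Your proposal is correct, but it takes a genuinely different route from the paper's proof. The paper works from the same reduction to pendant stars at the four brace vertices (implicitly so --- you make it explicit through Lemma \ref{13}), but then argues by local shifting: it moves all pendant edges from $v_2$ to $v_1$, then from $v_4$ to $v_3$, then from $v_1$ to $v_3$, computing the change in $Mo_e$ at each step, concluding that the extremal graphs are among $B^3_m$ and $B^4_m$, and finally comparing the two values $m^2-3m-6$ and $m^2-2m-15$. You instead derive the closed form $Mo_e(G)=(m-5)(m-1)+S$ (your five brace-edge $\psi$-values are all correct) and maximize $S$ exactly over the simplex via the identity $|x|+|y|=\max(|x+y|,|x-y|)$ and a three-way case split. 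What your route buys is sign-safety: the paper's shift computations expand the absolute values under tacit sign assumptions, and its last shift even asserts $Mo_e(G_3)-Mo_e(G_2)=3a_1\ge 0$ as an identity, which cannot be literally true --- it would make $B^4_m$ at least as good as $B^3_m$ for every $m$, contradicting the lemma's own conclusion for $6\le m\le 8$ (when $a_3=0$ the true difference is $a_1-4=m-9$). Your single global optimization produces the $B^3_m$/$B^4_m$ crossover at $s=4$, i.e.\ $m=9$, together with the equality cases, in one stroke. Two small repairs to your write-up: Lemma \ref{13} as stated carries no equality clause, so uniqueness should be traced to Lemma \ref{12} (only pendant cut edges attain $\psi=m-1$) combined with the remark in the preliminaries that restructuring a pendant tree leaves the brace edges' contributions unchanged; and in your intermediate case the bound $3s+2$ need not lie below $4s$ when $s\le 2$ --- what matters, and what always holds, is that it lies below $3s+4\le\max(3s+4,\,4s)$.
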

\begin{proof}
Let $G \in \mathcal{G}_m^2$ with brace $\Theta (1,2,2)$. Suppose that $v_1, v_2, v_3, v_4 $ be the vertices in $\Theta (1,2,2)$ of $G$ with $d_G(v_1)=d_G(v_2)=3$, and  $d_G(v_3)=d_G(v_4)=2$. Let $a_i$ be the number of pendent edges of $v_i$ ($i=1,2,3,4$). Suppose that $a_1 \geq a_2$ and $a_3 \geq a_4$. Let $G_1$ be the graph obtained from $G$ by shifting $a_2$ pendent edges from $v_2$ to $v_1$. We have
\begin{eqnarray*}
&& Mo_e(G_1 )-  Mo_e(G )  =  (a_1+a_2+2-2)-(a_1+2-a_2-2)\\
                       &\qquad + & (a_1+a_2+a_4+2-a_3-1)-(a_1+a_4+2-a_3-1)+(a_1+a_2+a_3+2-a_4-1)\\
                       &\qquad - & (a_1+a_3+2-a_4-1)+(a_4+2-a_3-1)-(a_2+a_4+2-a_3-1)\\
                       &\qquad + & (a_3+2-a_4-1)-(a_2+a_3+2-a_4-1)\\
                       & = & 2a_2 \geq 0.
\end{eqnarray*}
Hence, $ Mo_e(G_1 ) \geq  Mo_e(G )$ and equality holds if and only if $a_2 =0$, i.e., $G \cong G_1$.

Let $G_2$ be the graph obtained from $G_1$ by shifting $a_4$ pendent edges from $v_4$ to $v_3$. We have
\begin{eqnarray*}
&& Mo_e(G_2 )- Mo_e(G_1) =  (a_1+2-2)-(a_1+2-2)+(a_3+a_4+1-a_1-2)\\
                       &\qquad - & (a_3+1-a_1-a_4-2)+(a_1+a_3+a_4+2-1)-(a_1+a_3+2-a_4-1)\\
                       &\qquad + & (a_3+a_4+1-2)-(a_3+1-a_4-2)+(a_3+a_4+2-1)-(a_3+2-a_4-1)\\
                       & = & 8a_4 \geq 0.
\end{eqnarray*}
Hence, $ Mo_e(G_2 ) \geq  Mo_e(G_1 )$ and equality holds if and only if $a_4 =0$, i.e., $G_2 \cong G_1$.

Let $G_3$ be the graph obtained from $G_2$ by shifting $a_1$ pendent edges from $v_1$ to $v_3$. We have
\begin{eqnarray*}
&& Mo_e(G_3 )- Mo_e(G_2) =  -(a_1+2-2)+(a_1+a_3+1-2)-(a_3+1-a_1-2)\\
                       &\qquad + & (a_1+a_3+2-1)-(a_1+a_3+2-1)+(a_1+a_3+1-2)-(a_3+1-2)\\
                       &\qquad + & (a_1+a_3+2-1)-(a_3+2-1)\\
                       & = & 3a_1 \geq 0.
\end{eqnarray*}
Hence, $ Mo_e(G_3 ) \geq  Mo_e(G_2 )$ and equality holds if and only if $a_1 =0$, i.e., $G_3 \cong G_2$. Clearly, $G_2 \cong B^3_m$ with $a_3= 0$, and $G_3 \cong B^4_m$. Also, by simple calculation, we have $Mo_e(B^3_m)= m^2-3m-6$, $Mo_e(B^4_m)= m^2-2m-15$.
\end{proof}

\begin{Lemma}\label{33} Let $G \in \mathcal{G}_m^2$ with brace $\Theta (2,2,2)$. If $m \geq 6$, then $Mo_e(G) \leq m^2-m-28$ with equality if and only if $G \cong B^5_m$ (see Fig \ref{y}).
\end{Lemma}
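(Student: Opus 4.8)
The plan is to mirror the structure of Lemma \ref{32}, which handled the brace $\Theta(1,2,2)$, but now for the symmetric brace $\Theta(2,2,2)$. First I would fix notation: the brace $\Theta(2,2,2)$ has two vertices $v_1,v_2$ of degree $3$ joined by three internally disjoint paths of length $2$, whose three internal (midpoint) vertices call $v_3,v_4,v_5$, each of degree $2$ in the brace. As in the previous lemma, let $a_i$ denote the number of pendent edges attached at $v_i$, and by Lemma \ref{13} we may assume each pendent tree is a star, so the whole graph is determined by the data $(a_1,\dots,a_5)$ with $\sum_i a_i = m-6$. The symmetry of $\Theta(2,2,2)$ (the automorphism group acts transitively on $\{v_1,v_2\}$ and on $\{v_3,v_4,v_5\}$) will let me cut down the casework considerably.

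Next I would carry out a sequence of edge-shifting operations exactly as in Lemma \ref{32}: shifting pendent edges from a less-loaded vertex to a more-loaded one of the same symmetry class, and computing the resulting change $Mo_e(G')-Mo_e(G)$ term by term over the finitely many affected edges (the six brace edges and the pendent edges at the two vertices involved). Assuming $a_1\ge a_2$ among the degree-$3$ vertices and $a_3\ge a_4\ge a_5$ among the degree-$2$ vertices, I expect each such shift to produce a nonnegative increment of the form (positive constant)$\times a_j$, showing that concentrating all pendent edges is optimal. The arithmetic here is entirely routine — the only care needed is getting the four quantities $m_u(e),m_v(e)$ right for each brace edge, accounting for how pendent edges on either side of a given edge are counted, and tracking which edges lie equidistant and therefore do not contribute.

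The key structural step, and the one I would present most carefully, is deciding where to finally concentrate the edges. Because $\Theta(2,2,2)$ is vertex-symmetric in a way $\Theta(1,2,2)$ is not, I anticipate that the optimum $B^5_m$ places all pendent edges at a single degree-$2$ vertex (a midpoint of one of the three paths), and I would verify by a direct comparison that loading a degree-$2$ vertex beats loading a degree-$3$ vertex, producing the unique extremal graph $B^5_m$. Finally a direct computation on $B^5_m$ — which has a known structure and only $O(1)$ distinct edge-types once the pendent star is attached — yields $Mo_e(B^5_m)=m^2-m-28$.

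The main obstacle will be bookkeeping rather than conceptual: in the brace $\Theta(2,2,2)$ each of the six cycle edges has a somewhat delicate $m_u(e)-m_v(e)$ value because two internally disjoint length-$2$ paths lie "on the far side" of any given edge, so I must be scrupulous about which of those edges are closer to $u$, closer to $v$, or equidistant, and about how the pendent-edge counts $a_i$ enter each absolute value. Getting these six contributions correct (and confirming the sign pattern that makes each shift nonnegative) is where an error would most easily creep in; once the increments are verified nonnegative and the final graph identified, the closing evaluation of $Mo_e(B^5_m)$ is a short calculation.
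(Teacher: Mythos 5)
Your proposal follows essentially the same route as the paper's own proof: reduce to pendent stars at the five brace vertices with counts $a_1,\dots,a_5$, perform pendent-edge shifts within the two symmetry classes (assuming $a_1\ge a_2$ and $a_3\ge a_4\ge a_5$, each shift yielding a nonnegative increment proportional to the shifted count), and finish by showing that moving everything from the degree-$3$ vertex to a degree-$2$ midpoint is advantageous, landing on $B^5_m$ with $Mo_e(B^5_m)=m^2-m-28$. The paper executes exactly this plan (with increments $12a_2$, $8a_5$, a symmetry argument for $a_4$, and $6a_1$ for the final cross-class shift), so your outline is correct and matches it in both structure and substance.
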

\begin{proof}
Let $G \in \mathcal{G}_m^2$ with brace $\Theta (2,2,2)$. Let $v_1, v_2, v_3, v_4, v_5 $ be vertices in $\Theta (2,2,2)$ of $G$ with $d_G(v_1)=d_G(v_2)=3$ and  $d_G(v_3)=d_G(v_4)=d_G(v_5)=2$. Let $a_i$ be the number of pendent edges of $v_i$ ($i=1,2,3,4,5$). Suppose that $a_1 \geq a_2$ and $a_3 \geq a_4 \geq a_5$. Let $G_1$ be the graph obtained from $G$ by shifting $a_2$ pendent edges from $v_2$ to $v_1$. We have
\begin{eqnarray*}
 &&Mo_e(G_1 )-  Mo_e(G ) =  (a_1+a_2+a_4+a_5+2-a_3-1)\\
                       &\qquad - & (a_1+a_4+a_5+2-a_2-a_3-1)+(a_1+a_2+a_3+a_5+2-a_4-1)\\
                       &\qquad - & (a_1+a_3+a_5+2-a_2-a_4-1)+(a_1+a_2+a_3+a_4+2-a_5-1)\\
                       &\qquad - & (a_1+a_3+a_4+2-a_2-a_5-1)+(a_1+a_2+a_3+1-a_4-a_5-2)\\
                       &\qquad - & (a_1+a_3+1-a_2-a_4-a_5-2)+(a_1+a_2+a_4+1-a_3-a_5-2)\\
                       &\qquad - & (a_1+a_4+1-a_2-a_3-a_5-2)+(a_1+a_2+a_5+1-a_3-a_4-2)\\
                       &\qquad - & (a_1+a_5+1-a_2-a_3-a_4-2)\\
                       & = & 12a_2 \geq 0.
\end{eqnarray*}
Hence, $ Mo_e(G_1 ) \geq  Mo_e(G )$ and equality holds if and only if $a_2 =0$, i.e., $G \cong G_1$.

Let $G_2$ be the graph obtained from $G_1$ by shifting $a_5$ pendent edges from $v_5$ to $v_3$. We get
\begin{eqnarray*}
&& Mo_e(G_2 )-  Mo_e(G_1 ) =  (a_3+a_5+1-a_1-a_4-2)-(a_3+1-a_1-a_4-a_5-2)\\
                       &\qquad + & (a_1+a_3+a_5+2-a_4-1)-(a_1+a_3+a_5+2-a_4-1)\\
                       &\qquad + & (a_1+a_3+a_4+a_5+2-1)-(a_1+a_3+a_4+2-a_5-1)\\
                       &\qquad + & (a_1+a_3+a_5+1-a_4-2)-(a_1+a_3+1-a_4-a_5-2)\\
                       &\qquad + & (a_3+a_5+2-a_1-a_4-1)-(a_3+a_5+2-a_1-a_4-1)\\
                       &\qquad + & (a_3+a_4+a_5+2-a_1-1)-(a_3+a_4+2-a_1-a_5-1)\\
                       & = & 8a_5 \geq 0.
\end{eqnarray*}
Hence, $ Mo_e(G_2 ) \geq  Mo_e(G_1 )$ and equality holds if and only if $a_5 =0$, i.e., $G_2 \cong G_1$.

Let $G_3$ be the graph obtained from $G_2$ by shifting $a_4$ pendent edges from $v_4$ to $v_3$. By symmetry, $ Mo_e(G_3 ) \geq  Mo_e(G_2 )$, and equality holds if and only if  $G_3 \cong G_2$.
Let $G_4$ be the graph obtained from $G_3$ by shifting $a_1$ pendent edges from $v_1$ to $v_3$. We get
\begin{eqnarray*}
&& Mo_e(G_4 )-  Mo_e(G_3)=  (a_1+a_3+1-2)-(a_3+1-a_1-2)\\
                       &\qquad + & (a_1+a_3+2-1)-(a_1+a_3+2-1)+ (a_1+a_3+2-1)-(a_1+a_3+2-1)\\
                       &\qquad +&(a_1+a_3+1-2)-(a_1+a_3+1-2)+(a_1+a_3+2-1)-(a_3+2-a_1-1)\\
                       &\qquad + & (a_1+a_3+2-1)-(a_3+2-a_1-1)\\
                       & = & 6a_1 \geq 0.
\end{eqnarray*}
Hence, $ Mo_e(G_4 ) \geq  Mo_e(G_3 )$ and equality holds if and only if $a_1 =0$, i.e., $G_4 \cong G_3$. Note that $G_4 \cong B^5_m$. Clearly, $Mo_e(B^5_m )=m^2-m-28$.
\end{proof}

\begin{Lemma}\label{34} Let $G \in \mathcal{G}_m^2$ with brace $\Theta (1,2,3)$. If $m \geq 6$, then $Mo_e(G) \leq m^2-2m-16$ with equality if and only if $G \cong B^6_m$ (see Fig \ref{y}).
\end{Lemma}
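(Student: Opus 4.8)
The plan is to follow the template of Lemmas \ref{32} and \ref{33}: reduce to the case in which every pendent tree is a star, parametrise $G$ by the numbers of pendent edges hanging at the vertices of the brace, and then transport these edges between brace vertices until they are all concentrated at a single vertex, verifying at each step that the edge Mostar index does not decrease. First I would fix notation for the brace, writing $\Theta(1,2,3)$ with poles $v_1,v_2$ of degree $3$ joined by the length-$1$ edge $v_1v_2$, the length-$2$ path $v_1v_3v_2$, and the length-$3$ path $v_1v_4v_5v_2$, so that $d_G(v_1)=d_G(v_2)=3$ and $d_G(v_3)=d_G(v_4)=d_G(v_5)=2$ within the brace. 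By Lemma \ref{13} each pendent tree may be replaced by a star centred at its attachment vertex without decreasing $Mo_e$, so I may assume $G$ arises from $\Theta(1,2,3)$ by attaching $a_i$ pendent edges at $v_i$, with $a_1+\cdots+a_5=m-6$. The automorphism of $\Theta(1,2,3)$ swapping $v_1\leftrightarrow v_2$ and $v_4\leftrightarrow v_5$ then lets me assume $a_1\ge a_2$.

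A useful preliminary observation is that, by Lemma \ref{12}, each of the $m-6$ pendent edges contributes exactly $m-1$ to $Mo_e(G)$ regardless of where it is attached; hence their total contribution $(m-6)(m-1)$ is invariant, and only the six brace edges carry distribution-dependent information. The whole problem thus reduces to maximising $\sum_{e\in E(\Theta)}\psi_G(e)$ over the distributions $(a_1,\dots,a_5)$. I would then run the shifting argument, always moving edges toward the pole $v_1$: let $G_1$ be obtained by moving the $a_2$ edges from $v_2$ to $v_1$, then $G_2,G_3,G_4$ by successively moving $a_4$, $a_5$ and $a_3$ to $v_1$. For each step one computes $Mo_e(G_{i+1})-Mo_e(G_i)$ edge by edge over the six brace edges: for a fixed brace edge $e=xy$ one counts how many edges lie strictly closer to $x$ than to $y$ and vice versa, using the distances read off the small graph $\Theta(1,2,3)$, and shows the difference is a nonnegative integer multiple of the number of edges shifted, vanishing iff that number is $0$.

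Since $G_4$ is precisely the graph with all $m-6$ pendent edges at a single pole, that is $G_4\cong B^6_m$, this chain yields $Mo_e(G)\le Mo_e(B^6_m)$ with equality throughout iff no shift relocated anything, i.e. iff $G\cong B^6_m$. A direct count on $B^6_m$ then closes the proof: writing $k=m-6$, the brace edges contribute $\psi(v_1v_2)=k$, $\psi(v_1v_3)=\psi(v_1v_4)=\psi(v_5v_2)=k+2$, $\psi(v_2v_3)=2$ and $\psi(v_4v_5)=k$, for a subtotal of $5k+8$, while the pendent edges add $k(m-1)$; together these give $Mo_e(B^6_m)=k^2+10k+8=m^2-2m-16$. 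Because a single extremal graph governs the whole range, no crossover between competing graphs occurs and the bound holds uniformly for all $m\ge 6$ (the case $m=6$ being the bare $\Theta(1,2,3)$).

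The hard part will be the bookkeeping in the four shift computations. Unlike the fully symmetric brace $\Theta(2,2,2)$ of Lemma \ref{33}, the brace $\Theta(1,2,3)$ is genuinely asymmetric: the length-$1$ edge $v_1v_2$, the two length-$2$ edges $v_1v_3,v_2v_3$, and the three length-$3$ edges $v_1v_4,v_4v_5,v_5v_2$ each have a different $\psi$-profile, and for several of them certain edges are equidistant from the two endpoints and must be discarded from both counts. Keeping these equidistant edges straight, and correctly tracking after each shift which side the relocated star falls on (for instance a star at $v_4$ lies closer to $v_1$ across the edge $v_1v_2$, whereas a star at $v_3$ is equidistant from $v_1$ and $v_2$), is where the computation is delicate. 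Once the signed counts are tabulated, however, each difference collapses to a clean nonnegative expression exactly as the $12a_2$, $8a_5$, $6a_1$ of Lemma \ref{33}, and the argument goes through.
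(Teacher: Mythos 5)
Your overall strategy---reduce pendent trees to stars, then shift pendent edges step by step to one pole of the brace---is the same as the paper's, and your closing computation of $Mo_e(B^6_m)=(5k+8)+k(m-1)=m^2-2m-16$ with $k=m-6$ is correct. But there is a genuine gap at the very first shift: the normalization $a_1\ge a_2$ is the wrong ``without loss of generality,'' and under it the move of the $a_2$ pendent edges from $v_2$ to $v_1$ can strictly \emph{decrease} the edge Mostar index. With your labelling the brace edges satisfy
$\psi(v_1v_2)=\psi(v_4v_5)=\left|(a_1+a_4)-(a_2+a_5)\right|$,
$\psi(v_1v_3)=\left|a_1+a_4+2-a_3\right|$,
$\psi(v_2v_3)=\left|a_2+a_5+2-a_3\right|$,
$\psi(v_1v_4)=\psi(v_2v_5)=\left|a_1+a_2+a_3+2-a_4-a_5\right|$,
so the two edges $v_1v_2$ and $v_4v_5$ reward the imbalance between the \emph{sides} $\{v_1,v_4\}$ and $\{v_2,v_5\}$, not between $v_1$ and $v_2$ alone. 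Concretely, take $m=18$ with $a_1=a_2=1$, $a_5=10$, $a_3=a_4=0$: your hypothesis $a_1\ge a_2$ holds, the brace edges contribute $10+3+13+6+10+6=48$, but after moving the single pendent edge from $v_2$ to $v_1$ they contribute $8+4+12+6+8+6=44$, so $Mo_e$ drops by $4$ (the pendent edges contribute $12\cdot 17$ in both graphs). The first link of your chain $Mo_e(G)\le Mo_e(G_1)\le\cdots$ is therefore false, as is the blanket claim that every difference is a nonnegative multiple of the number of edges shifted.

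The paper avoids exactly this trap. It uses the automorphism $(v_1v_2)(v_4v_5)$ to normalize by the side sums, $a_1+a_4\ge a_2+a_5$, and its first shift empties the light side onto the heavy side \emph{coherently}: $a_2$ goes to $v_1$ and, simultaneously, $a_5$ goes to $v_4$. Only afterwards does it pull $a_4$ and then $a_3$ into $v_1$. In the example above this first move sends the graph to the configuration $(2,0,0,10,0)$, raising the brace contribution from $48$ to $52$, as it should. If you adopt that normalization and that simultaneous first move, your argument can be repaired, but each remaining step must then be re-verified with attention to the sign cases inside the absolute values displayed above (for large $a_3$, or large $a_4+a_5$, several of those expressions change sign, so a single algebraic formula for the difference is not valid across all configurations).
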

\begin{proof}
Let $G \in \mathcal{G}_m^2$ with brace $\Theta (1,2,3)$. Let $v_1, v_2, v_3, v_4, v_5 $ be vertices in $\Theta (1,2,3)$ of $G$ with $d_G(v_1)=d_G(v_2)=3$ and  $d_G(v_3)=d_G(v_4)=d_G(v_5)=2$. Let $a_i$ be the number of pendent edges of $v_i$ ($i=1,2,3,4,5$). Suppose that $a_1 + a_4 \geq a_2 + a_5$ . Let $G_1$ be the graph obtained from $G$ by shifting $a_2$ (resp. $a_5$) pendent edges from $v_2$ (resp. $v_5$) to $v_1$ (resp. $v_4$). We deduce that
\begin{eqnarray*}
&& Mo_e(G_1 )-  Mo_e(G ) =  (a_1+a_2+a_4+a_5+2-2)-(a_1+a_4+2-a_2-a_5-2)\\
                       &\qquad + & (a_1+a_2+a_4+a_5+3-a_3-1)-(a_1+a_4+3-a_3-1)+(3-a_3-1)\\
                       &\qquad - & (a_2+a_5+3-a_3-1)+(a_1+a_2+a_3+3-a_4-a_5-1)\\
                       &\qquad - & (a_1+a_2+a_3+3-a_4-a_5-1)+(a_1+a_2+a_3+3-a_4-a_5-1)\\
                       &\qquad - & (a_1+a_2+a_3+3-a_4-a_5-1)+(a_1+a_2+a_4+a_5+2-2)\\
                       &\qquad - & (a_1+a_4+2-a_2-a_5-2)\\
                       & = & 4( a_2 + a_5 ) \geq 0.
\end{eqnarray*}
Hence, $ Mo_e(G_1 ) \geq  Mo_e(G )$ and equality holds if and only if $a_2= a_5 =0$, i.e., $G \cong G_1$.

Let $G_2$ be the graph obtained from $G_1$ by shifting $a_4$ pendent edges from $v_4$ to $v_1$. We obtain
\begin{eqnarray*}
&& Mo_e(G_2 )- Mo_e(G_1)=  (a_1+a_4+2-2)-(a_1+a_4+2-2)\\
                       &\qquad + & (a_1+a_4+3-a_3-1)-(a_1+a_4+3-a_3-1)+(3-a_3-1)\\
                       &\qquad - & (3-a_3-1)+(a_1+a_3+a_4+3-1)-(a_1+a_3+3-a_4-1)\\
                       &\qquad + & (a_1+a_3+a_4+3-1)-(a_1+a_3+3-a_4-1)\\
                       &\qquad + & (a_1+a_4+2-2)-(a_1+a_4+2-2)\\
                       & = & 4 a_4 + a_1 -a_3 \geq 0.
\end{eqnarray*}
Hence, $ Mo_e(G_2 ) \geq  Mo_e(G_1 )$ and equality holds if and only if $a_4 =0, a_1= a_3$, i.e., $G_2 \cong G_1$.

Let $G_3$ be the graph obtained from $G_2$ by shifting $a_3$ pendent edges from $v_3$ to $v_1$. We have
\begin{eqnarray*}
&& Mo_e(G_3 )- Mo_e(G_2)=  (a_1+a_3+2-2)-(a_1+2-2)+(a_1+a_3+3-1)\\
                       &\qquad - & (a_1+3-a_3-1)+(a_1+a_3+3-1)-(a_1+a_3+3-1)+(a_1+a_3+2-2)\\
                       &\qquad - & (a_1+2-2)+(3-1)-(3-a_3-1)+(a_1+a_3+3-1)-(a_1+a_3+3-1)\\
                       & = & 5a_3 \geq 0.
\end{eqnarray*}
Hence, $ Mo_e(G_3 ) \geq  Mo_e(G_2 )$ and equality holds if and only if $a_3 =0$, i.e., $G_3 \cong G_2$. Note that $G_3 \cong B^6_m$. Clearly, $Mo_e(B^6_m )=m^2-2m-16$.
\end{proof}

\begin{Lemma}\label{35} Let $G \in \mathcal{G}_m^2$ with brace $\Theta (a,b,c)$. Then
$$Mo_e(G) \leq \left\{\begin{array}{ll}
        4,  & \hbox{if $m=5$, and equality holds iff $G \cong B^3_m, B^4_m$};  \\
        m^2-3m-6,  & \hbox{if $6 \leq m \leq 8$, and equality holds iff $G \cong B^3_m$};  \\
        48,        & \hbox{if $ m=9$, and equality holds iff $G \cong  B^3_m, B^4_m$};  \\
        m^2-2m-15,  & \hbox{if $10 \leq m \leq 12$, and equality holds iff $G \cong B^4_m$}; \\
        128,        & \hbox{if $ m=13$, and equality holds iff $G \cong  B^4_m, B^5_m$};  \\
        m^2-m-28,  & \hbox{if $ m \geq 14$, and equality holds iff $G \cong B^5_m$.}
\end{array}\right.$$
\end{Lemma}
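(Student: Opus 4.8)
The plan is to combine the three brace-specific results already in hand (Lemmas \ref{32}, \ref{33}, \ref{34}) with a reduction showing that every brace with $a+b+c\geq 7$ is strictly dominated, and then to read off the six-case maximum by a purely numerical comparison of the four candidate graphs. Throughout I work inside $\mathcal{G}_m^2$, as required by the statement.

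First I would record the four optimal values already computed: $Mo_e(B^3_m)=m^2-3m-6$ and $Mo_e(B^4_m)=m^2-2m-15$ for the brace $\Theta(1,2,2)$ (Lemma \ref{32}); $Mo_e(B^5_m)=m^2-m-28$ for the brace $\Theta(2,2,2)$ (Lemma \ref{33}); and $Mo_e(B^6_m)=m^2-2m-16$ for the brace $\Theta(1,2,3)$ (Lemma \ref{34}). The pairwise differences are linear in $m$: $Mo_e(B^4_m)-Mo_e(B^3_m)=m-9$, $Mo_e(B^5_m)-Mo_e(B^4_m)=m-13$, and $Mo_e(B^4_m)-Mo_e(B^6_m)=1$. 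Hence $B^6_m$ is always strictly beaten by $B^4_m$ and never enters an equality case, while $B^3_m$, $B^4_m$, $B^5_m$ overtake one another exactly at the crossover sizes $m=9$ and $m=13$. Tracking which of the three is largest on each interval produces precisely the six cases of the statement, including the ties where the values of $B^3_m$ and $B^4_m$ agree at $m=9$ and those of $B^4_m$ and $B^5_m$ agree at $m=13$; at $m=5$ the brace $\Theta(1,2,2)$ already uses all five edges, so $B^3_m$ and $B^4_m$ coincide and give the value $4$.

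The structural heart of the argument is to show that no brace other than the three smallest can be optimal. By Lemma \ref{13} we may first replace every pendant tree by a star, so that $G$ is a brace $\Theta(a,b,c)$ carrying pendant edges on its vertices. Since $\Theta(1,2,2)$, $\Theta(1,2,3)$ and $\Theta(2,2,2)$ are the only braces with $a+b+c\leq 6$, any remaining brace has $a+b+c\geq 7$, which forces $c\geq 3$; thus its longest path carries at least two internal degree-$2$ vertices. For such a graph I would apply a length-reduction move: contract one edge of the longest path—converting $\Theta(a,b,c)$ into $\Theta(a,b,c-1)$ and lowering the brace size by one—and reattach the freed edge as a pendant edge at the vertex already bearing the concentrated pendant load. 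This keeps $G$ in $\mathcal{G}_m^2$ (all three internally disjoint paths survive) and, after finitely many steps, lands in one of the three smallest braces. Showing that each such step does not decrease, and in fact strictly increases, $Mo_e$ would prove that every $a+b+c\geq 7$ brace is strictly dominated, so the brace-specific lemmas and the comparison of the previous paragraph together close the proof.

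The hard part will be the length-reduction inequality. Contracting a brace edge shortens one side of the $\Theta$-structure and therefore simultaneously perturbs the distances, and hence the quantities $\psi(e)$, of \emph{every} cycle edge, whereas the compensating pendant edge contributes a clean $m-1$ by Lemma \ref{12}. The delicate point is that the sign of the net change in the cycle-edge contributions depends on how the pendant mass is distributed among $v_1,\dots,v_5$, exactly as in the case analyses of Lemmas \ref{32}--\ref{34}; consequently the reduction must be performed only after the pendant edges have been concentrated by the same shifting moves used there, and the resulting telescoping difference must then be shown nonnegative uniformly in $a,b,c$. As a fallback I would keep in reserve a direct estimate of $\sum_{e\in\text{brace}}\psi(e)$ in terms of $a,b,c$ and the position of the loaded vertex, to be maximised over all $(a,b,c)$ with $a+b+c\geq 7$; this avoids the step-by-step contraction at the cost of an explicit but somewhat intricate optimisation of the brace contribution.
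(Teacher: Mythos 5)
Your reduction of the final statement to a numerical comparison of $B^3_m$, $B^4_m$, $B^5_m$, $B^6_m$ is correct and agrees with the paper: the differences $m-9$, $m-13$ and the constant $1$ give exactly the crossovers at $m=9$ and $m=13$, the permanent defeat of $B^6_m$ by $B^4_m$, and the coincidence $B^3_5\cong B^4_5$ with value $4$. The genuine gap is in what you yourself call the structural heart: the claim that every brace $\Theta(a,b,c)$ with $a+b+c\geq 7$ is strictly dominated is never actually proved. Your length-reduction move (contract an edge of the longest path, reattach it as a pendant edge) is only a plan; the decisive monotonicity inequality is left as something that ``must then be shown nonnegative uniformly in $a,b,c$,'' and you note it can only be run after the pendant edges have been concentrated --- but the concentration step for an arbitrary brace (which vertex receives the load, and why each shift helps) is also not carried out. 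Both steps are precisely the kind of many-term $\psi$-computations that make up Lemmas \ref{32}--\ref{34}, now needed for unboundedly many brace shapes, so this is substantial unperformed work rather than a routine verification. Moreover, the equality characterization in the Lemma requires \emph{strict} domination of every other brace; a ``does not decrease'' version of your inequality, even if established, would not pin down the extremal graphs.

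For comparison, the paper avoids graph transformations entirely at this stage. For each remaining brace shape it selects the few cycle edges incident to the two branch vertices $x$ and $y$, observes that each such edge $e$ satisfies a crude bound of the form $\psi(e)\leq m-5,\dots,m-9$ (on the other long paths several edges are equidistant from the endpoints of $e$, hence counted on neither side), bounds every other edge by $\psi(e)\leq m-1$ via Lemma \ref{12}, and checks case by case that the total is strictly below $m^2-m-28$, e.g. $6(m-7)+(m-6)(m-1)<m^2-m-28$. This is essentially the ``fallback'' you mention in your last paragraph, and it is both simpler and stronger than the contraction route: it needs no monotonicity along a sequence of graphs, no concentration of pendant load, and it delivers the strict inequalities that the equality cases require. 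To complete your proof you should promote that fallback to the main argument; as written, the proposal is a strategy whose decisive step is missing.
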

\begin{proof}
 Suppose that $x$ and $y$ are the vertices with degree 3 in $\Theta (a,b,c)$ of $G$. Let $P_{a+1}, P_{b+1}, P_{c+1}$ be the three disjoint paths connecting $x$ and $y$. We proceed with the following three possible cases.

\noindent {\bf Case 1.} $c \geq b \geq a \geq 3$.

\noindent {\bf Subcase 1.1.} $c = b = a \geq 3$.

We choose  six edges such that each one is incident to $x$ or $y$ in $\Theta (a,b,c)$. Let $e$ be one of the six edges. Then $\psi(e) \leq m-7$. This fact is also true for the remaining five edges. Thus,
\[
Mo_e(G) \leq 6(m-7)+(m-6)(m-1) < m^2-m-28.
\]

\noindent {\bf Subcase 1.2.} $c \geq b \geq 4, a = 3$.

Let  $e_1$ be one of the four edges incident to $x$ or $y$ in the paths $P_{b+1}$ and $P_{c+1}$, and  $e_2$ be one of the two edges incident to $x$ or $y$ in the path $P_{a+1}$. Then $\psi(e_1) \leq m-8$, and $\psi(e_2) \leq m-9$. Hence,
\[
Mo_e(G) \leq 4(m-8)+2(m-9)+(m-6)(m-1) < m^2-m-28.
\]

\noindent {\bf Case 2.} $c \geq b \geq a =2$.

\noindent {\bf Subcase 2.1.} $c = b = a =2$.

The Subcase follows from Lemma \ref{33}.

\noindent {\bf Subcase 2.2.} $c \geq 3, b =  a =2$.

Let $e_1$ be one of the four edges in the paths $P_{a+1}$ and $P_{b+1}$, and $e_2$ be one of the two edges incident to $x$ or $y$ in the path $P_{c+1}$. Then $\psi(e_1) \leq m-7$,  and $\psi(e_2) \leq m-6$. Hence,
\[
Mo_e(G) \leq 4(m-7)+2(m-6)+(m-6)(m-1) < m^2-m-28.
\]

\noindent {\bf Subcase 2.3.} $c \geq b \geq 3,  a =2$.

Let $e_1$ be one of the four edges incident to $x$ or $y$ in the paths $P_{b+1}$ and $P_{c+1}$, and $e_2$ is an edge in the path $P_{a+1}$. Then $\psi(e_1) \leq m-6$, and $\psi(e_2) \leq m-9$. Hence,
\[
Mo_e(G) \leq 4(m-6)+2(m-9)+(m-6)(m-1) < m^2-m-28.
\]

\noindent {\bf Case 3.} $c \geq b \geq 2 \geq  a =1$.

\noindent {\bf Subcase 3.1.} $c = b =3,  a =1$.

By Lemma \ref{32} and simple calculation, we have if $ m \geq 14$, then $Mo_e(B^5_m) > Mo_e(B^4_m)$;
if $ m=13$, then $Mo_e(B^3_m) = Mo_e(B^4_m) > Mo_e(B^5_m)$;
if $10 \leq m \leq 12$, then $Mo_e(B^4_m) > Mo_e(B^5_m)$;
if $ m=9$, then $Mo_e(B^3_m) = Mo_e(B^4_m) > Mo_e(B^5_m)$;
if $6 \leq m \leq 8$, $Mo_e(B^3_m) > Mo_e(B^4_m) > Mo_e(B^5_m)$;
if $m=5$, then $Mo_e(B^3_m) = Mo_e(B^4_m)$.

\noindent {\bf Subcase 3.2.} $c = 3, b =2,  a =1$.

This Subcase follows from Lemma \ref{34}.

\noindent {\bf Subcase 3.3.} $c \geq 4, b =2,  a =1$.

Let $e_1=xy$, $e_2$ be one of the four edges in the paths $P_{b+1}$ and $P_{c+1}$ incident to $x$ or $y$, and $e_3$ be one of the two edges in the middle of the path $P_{c+1}$. Then $\psi(e_1) = 0$, $\psi(e_2) \leq m-5$, and  $\psi(e_3) \leq m-6$. Thus,
\[
Mo_e(G) \leq 4(m-5)+2(m-6)+(m-7)(m-1) < m^2-m-28.
\]

\noindent {\bf Subcase 3.4.} $c \geq 4, b =3,  a =1$.

  Let $e_1=xy$, $e_2$ be one of the two edges in the path $P_{b+1}$  incident to $x$ or $y$, $e_3$ be one of the two edges in the path $P_{c+1}$  incident to $x$ or $y$, and $e_4$ be one of the two edges in the middle of the path $P_{c+1}$. Then $\psi(e_1) = 0$,  $\psi(e_2) \leq m-4$, $\psi(e_2) \leq m-5$, and  $\psi(e_4) \leq m-6$. Hence,
\[
Mo_e(G) \leq 2(m-4)+2(m-5)+2(m-6)+(m-7)(m-1) < m^2-m-28. \qedhere
\]
\end{proof}

The proof of the Theorem \ref{a} follows from Lemmas \ref{31} and \ref{35}.

\vspace{10mm}

\noindent {\bf Acknowledgement:} This work was partially supported  by the National Natural Science Foundation of China (Grant Nos. 12071194, 11571155 and 12071158).

\vspace{3mm}

\end{document}